\documentclass{amsart}
\usepackage{verbatim,amssymb,amsmath,amscd,latexsym,amsbsy,mathrsfs} 
\usepackage{graphicx}
\usepackage[all]{xy}
 \input{xy}
 \xyoption{all}
\usepackage{tikz-cd}
\usepackage{color}
\usepackage{mathrsfs}

\newtheorem{thm}{Theorem}[section]
\newtheorem{cor}[thm]{Corollary}

\newtheorem{prop}[thm]{Proposition}

\newtheorem{lemma}[thm]{Lemma}
\newtheorem{rmk}[thm]{Remark}

\newtheorem{ex}[thm]{Example}

\DeclareMathOperator*{\Ind}{Ind }
\DeclareMathOperator*{\Res}{Res }

\DeclareMathOperator*{\tr}{tr}

\newcommand{\cL}{\mathcal{L}}

\newcommand {\V} {\mathcal{V}}

\newcommand {\C} {{\mathbb C}}

\newcommand {\Z} {{\mathbb Z}}

\newcommand {\G} {\widehat{G}}

\newcommand {\OO} {{\mathcal O}}

\newcommand {\W} {\mathcal{W}}

 \begin{document}
\title{Residues of connections and the Chevalley-Weil formula for curves}
\author{
        Donu Arapura    
      }
       \keywords{Riemann surface, character}
      \subjclass[2020]{14H55, 14H37}
 \thanks {Partially supported by a grant from the Simons foundation}
\address{Department of Mathematics\\
 Purdue University\\
 West Lafayette, IN 47907\\
U.S.A.}
 \maketitle
 
 \begin{abstract}
  Given a finite group of automorphisms of a compact Riemann surface,
  the Chevalley-Weil formula computes the character valued Euler
  characteristic of an equivariant line bundle. The goal of this
  article is to give a proof by computing  using residues of a
  Gauss-Manin connection.
 \end{abstract}

Let  $G$ be a finite group of automorphisms of a compact Riemann
surface $X$. 
Then $G$ will act on the space  of holomorphic $1$-forms,  more generally on the
space of sections of powers of the canonical bundle $K$, or even more generally on the cohomology
of an equivariant holomorphic line bundle $L$. The virtual representation $H^0(X,L)-H^1(X,L)$
is  determined
by its character $\chi_G(L)$, and there  are at least two ways in which one
might try to compute it. First of all, one could try to find  a formula for the character as a complex valued function on 
the group. The holomorphic Lefschetz fixed point theorem of Atiyah and Bott \cite{ab} gives a formula for $\chi_G(L)$
in this form:
$$\chi_G(L)(g)= \sum_p \frac{\nu_p(g)}{1-\tau_p(g)}, \>g\not=1$$
where the sum runs over fixed points of $g$,
and $\tau_p$ and $\nu_p$ are the characters of the action of the isotropy groups $G_p$ 
 on $T_p^*$ and $L_p$.
The second, and arguably more useful type of formula, is  an  expression for  $\chi_G(L)$ as a linear combination  of 
characters of irreducible  representations, the set of which we denote
by $\G$. Chevalley and Weil \cite{cw} gave such a formula when $L$ is
a positive power of the canonical bundle.  Formulas of this type for more
general $L$ were considered
by Ellingsrud and L{\o}nsted \cite{el}.
When $G$ acts freely
on $X$,  $\chi_G(L)  $ is a multiple of the character of the
regular representation $\chi_{reg}$. More specifically,
$$\chi_G(L) =\left(\frac{1}{|G|}\deg L + 1-h\right)\chi_{reg} $$
where $h$ is the genus of $X/G$.
If the action is not free, then
$$\chi_G(L) =\left(\frac{1}{|G|}\deg L + 1-h\right)\chi_{reg}
-\sum_{\xi\in \G} m_\xi(L)\xi$$
where the coefficients $m_\xi(L)$ are given by explicit formulas depending  on $\xi,\tau_p$ and $\nu_p$.
This can be made much more explicit
when $G$ is cyclic, and we will see that  the holomorphic Lefschetz fixed
point theorem for finite order automorphisms of Riemann surfaces is a straightforward consequence of this.

Our main goal is  give a new proof of the (generalized) Chevalley-Weil
formula.
The argument is neither very efficient nor  particularly general, but we
hope that it is instructive.
We start by proving a general residue theorem, which  expresses the degree of a vector
bundle on a compact Riemann surface as minus the sum of traces of residues of a  singular holomorphic 
connection. Next given an equivariant line bundle $L$,
we decompose the direct image $\pi_*L=\bigoplus
V_\xi$ into isotypic components, where $\pi:X\to X/G$ is the projection.
The heart of the proof is the
computation of the degrees of the  bundles $V_\xi$ by using the
residue theorem applied to certain Gauss-Manin connections.
With these in hand, the formula follows easily from
Riemann-Roch on $X/G$.  
 
 \section{Characters}
We start by recalling a few facts from representation theory needed below
\cite{serre}. Let $G$ be a finite group of order $N$. Let $F(G)$
denote the ring of class functions on $G$. The vector space $F(G)$ has a basis given by
the set $\G$ of  characters of irreducible representations. In fact, $\G$ forms
an orthonormal basis with respect to the inner product
$$\langle \chi, \eta\rangle=\langle\chi ,\eta \rangle_G =
\frac{1}{N}\sum_{g\in G} \chi(g)\overline{\eta(g)}$$
Thus any element $\eta\in F(G)$ can be
expanded as a Fourier series
$$\eta= \sum_{\chi\in \G} \langle \eta,\chi\rangle \chi$$
We write
$$\chi_{reg}(g)= \sum_{\chi\in \G} \chi(1)\chi(g)=
\begin{cases}
  N &\text{if $g=1$}\\
  0 &\text{otherwise}
\end{cases}
$$
for the character of the regular representation $\C[G]$.
If $\chi\in \G$, and $M$ is a $G$-module, then the $\chi$-isotypic component
$M_\chi\subset M$ is given by $e_\chi M$, where
$$e_\chi = \frac{\chi}{N}\sum_{g\in G} \chi(g^{-1})g\in \C[G]$$
is a central idempotent.

If $H\subset G$ is a subgroup, let $\Ind:F(H)\to F(G)$ be
induction. This takes the character of an $H$-module $M$ to the character
of the induced $G$-module $\C[G]\otimes_{\C[H]} M$. In
the opposite direction we have restriction. Frobenius reciprocity says
that these operations are adjoint, i.e. if $\chi\in F(H),\eta\in F(G)$,
$$\langle\chi,\eta|_H\rangle_H= \langle \Ind\chi, \eta\rangle_G$$

\section{Statement of the Chevalley-Weil formula}

Let $X$ be a compact Riemann surface. Fix   a finite subgroup $G\subseteq
Aut(X)$ of  order $N$, and let $\pi:X\to Y=X/G$ be the quotient. For each $p\in X$, let $G_p=
\{g\in G\mid gp=p\}$ be the isotropy group. It acts faithfully on the
cotangent space $T_p^*= m_p/m_p^2$. In particular, it is cyclic of
finite order, say $N_p$. Let $\tau_p$ be the character of the
representation of $G_p$ on $T_p^*$. We can choose an isomorphism $G_p\cong \Z/N$ such that
$\tau_p(1)=e^{2\pi i/N}=\zeta$, and a local coordinate $x$ at $p$ so
that $\tau_p(1)$ acts by $x\mapsto \zeta x$.

Let $\psi:L\to X$ be a   line bundle in the geometric sense, i.e. $L$ is a complex manifold, with holomorphic projection $\psi$
satisfying the usual conditions. There is an action of $\C^*$ on $L$ preserving the fibres.
Let $Aut^{\C^*}(L)$ denote the group of holomorphic automorphisms of the manifold
$L$ which commute with the action of $\C^*$. It follows easily that an element of  $Aut^{\C^*}(L)$  preserves the zero section. Thus we have a homomorphism
$$p:Aut^{\C^*}(L)\to Aut(X)$$
which sends $\gamma$ to its restriction to the zero section.
Let 
$$Aut(X)_L= \{g\in Aut(X)\mid g^*L\cong L\}$$
denote the isotropy group of $L\in Pic(X)$.
There is an exact sequence
$$1\to \C^*\to Aut^{\C^*}(L)\xrightarrow{p} Aut(X)_L\to 1$$
\cite[lemma 3.4.1]{brion}. Let us suppose that $G\subseteq Aut(X)_L$, or equivalently that $L\in Pic(X)^G$ is invariant under $G$.
A $G$-linearization of $L$  is a splitting of
\begin{equation}
  \label{eq:G}
   1\to \C^*\to p^{-1}G\to G \to 1,
\end{equation}
 or in other words a lifting of $G$ to a subgroup of $ Aut^{\C^*}(L)$.
 A $G$-equivariant line bundle on $X$ is a  line bundle with a
 $G$-linearization. Equivariant bundles and invariant bundles are not
 quite the same thing. If $Pic_G(X)$ denotes the group of equivariant
 line bundles, then there is forgetful homomorphism
 $$Pic_G(X)\to Pic(X)^G$$
In general, it is neither injective nor surjective, but  it will
 have finite kernel and cokernel. To see this, observe that
by basic  facts about group cohomology \cite[chap IV]{brown}, the
obstruction to splitting \eqref{eq:G} lies in $H^2(G, \C^*)$.
 If the obstruction vanishes, then the set of splittings is parametrized by
 $H^1(G,\C^*)\cong Hom(G,\C^*)$. Both cohomology groups are finite.
 
Given a $G$-equivariant line bundle $L$, $G$ will act on  $L$ by bundle
automorphisms. Consequently, it will act on the space of holomorphic sections $\cL(U) = \OO_X(L)(U)$ for
any $G$-invariant set $U$. Therefore
 $G$ will act on the \v{C}ech complex $\check{C}(\mathcal{U}, \cL)$
 for any covering by $G$-invariant open Stein sets. (Note that such a
 cover exists, because we may  use the preimage of a Stein
 cover of the Riemann surface $X/G$.) Thus $G$  acts on $H^i(X,\cL)$. Let $\chi_G(L)$ denote
 the character of the virtual representation
 $$H^0(X,\cL)-H^1(X,\cL)$$
 In other words,  $\chi_G(L)$  is the character of $H^0(X,\cL)$ minus the character of
$H^1(X,\cL)$.

Here are a few examples of equivariant line bundles.

\begin{ex}
For any homomorphism $h:G\to \C^*$,
the trivial bundle $\C_X= \C\times X$ has a $G$-linearization, where $g(v, x)=(h(g)v,gx)$.
We usually take $h=1$. Then the action on sections $\OO_X(U)$ is the usual action on functions
$gf(x)= f(g^{-1}x)$
\end{ex}

\begin{ex}
The canonical bundle $K$ has a natural $G$-linearization such that the $G$-action on
 sections is the action  $\omega\mapsto g^*\omega$ on $1$-forms. There
 is an induced linearization on any power $K^{\otimes n}$.
\end{ex}

\begin{ex}
 If $D=\sum n_p p$ is a divisor, then  recall that we have a line bundle whose sheaf of sections is
 $$\OO_X(D)(U) = \{f \text{ meromorphic on } U\mid \forall p\in U, ord_p(f)\ge -n_p\}$$
 If $D$ is $G$-invariant, $\OO_X(D)$ has
 a $G$-linearlization such that the action is $gf(x) = f(g^{-1}
 x)$. All $G$-equivariant line bundles arise this way; more precisely
 $$Pic_G(X)\cong \frac{Div(X)^G}{\pi^*Princ(Y)}$$
 \cite[thm 2.3]{borne}.
\end{ex}

 If $L$ is equivariant, then  $G_p$ will act on the
 fibre $L_p$. Let $\nu_p$ denote the character of this representation.

\begin{thm}\label{thm:cw}
Let $X,G$ be as above, and let $L$ be an equivariant line bundle on
$X$.  Let $Y=  X/G$ have genus $h$. Then
\begin{equation}
  \label{eq:CW}
\chi_G(L) = \left(\frac{1}{N}\deg L + 1-h\right)\chi_{reg} - \sum_{\xi\in
  \G} m_\xi(L) \xi
\end{equation}
where
$$m_{\xi,p}(L) = \frac{\xi(1)}{N} \sum_{i=0}^{N_p-1}
  i\langle \xi|_{G_p}, \nu_p\cdot \tau_p^i\rangle_{G_p}$$
  $$m_\xi(L) = \sum_{p\in X} m_{\xi,p}(L)$$
\end{thm}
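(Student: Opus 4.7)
The plan follows the four-step outline sketched in the introduction: translate the problem to $Y$, apply Riemann--Roch, construct a meromorphic connection, and compute its residues locally.

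\emph{Reduction to a degree computation on $Y$.} Since $\pi$ is finite, $\pi_*\cL$ is a locally free $\OO_Y$-module of rank $N$, whose generic fibre is the regular representation $\C[G]$. The central idempotents $e_\xi$ decompose $\pi_*\cL = \bigoplus_{\xi\in\G} V_\xi$; writing $V_\xi \cong W_\xi\otimes_\C V_\xi^{\mathrm{irr}}$ with $W_\xi$ locally free of rank $\xi(1)$ and trivial $G$-action, Leray gives $H^i(X,\cL) = \bigoplus_\xi H^i(Y,W_\xi)\otimes V_\xi^{\mathrm{irr}}$, and Riemann--Roch on $Y$ yields
$$\chi_G(L) \;=\; \sum_\xi \bigl[\deg W_\xi + \xi(1)(1-h)\bigr]\,\xi \;=\; (1-h)\chi_{reg} + \sum_\xi \frac{\deg V_\xi}{\xi(1)}\,\xi.$$
Matching with the target formula reduces the theorem to showing $\deg V_\xi = \xi(1)^2 \deg L/N - \xi(1)\,m_\xi(L)$.

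\emph{A Gauss--Manin-type connection.} Pick a $G$-invariant divisor $D$ with $\cL\cong \OO_X(D)$ (which exists by the cited description of $Pic_G(X)$) and let $\sigma$ be the corresponding $G$-invariant rational section. Setting $\nabla\sigma = 0$ yields a meromorphic, $G$-equivariant connection on $\cL$ with logarithmic poles along $|D|$; the pushforward respects the isotypic decomposition, restricting to a meromorphic connection on each $V_\xi$. The general residue theorem then gives $\deg V_\xi = -\sum_{y\in Y} \tr\Res_y \nabla|_{V_\xi}$.

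\emph{Local residue at a branch point.} At a branch point $y$, pick a preimage $p$ with cyclic stabilizer $G_p$ of order $N_p$, a local coordinate $x$ at $p$ on which the generator acts by $x\mapsto \tau_p(1)x$, and a $G_p$-equivariant frame $s$ of $\cL$ at $p$ of character $\nu_p$. Then $t = x^{N_p}$ is a local coordinate at $y$, and $(\pi_*\cL)_y$ is $\Ind_{G_p}^G \cL_p$ as a $\C[G]$-module, where $\cL_p = \bigoplus_{i=0}^{N_p-1}\OO_{Y,y}\cdot x^i s$ and $x^i s$ is a $G_p$-eigenvector of character $\tau_p^i \nu_p$. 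Writing $\sigma = u\, x^{m_p}\,s$ with $u$ a local unit and $m_p=\mathrm{ord}_p(\sigma)$, and using $dx = dt/(N_p\, x^{N_p-1})$, a direct calculation gives
$$\nabla(x^i s) \;\equiv\; \frac{i - m_p}{N_p}\,\frac{dt}{t}\,x^i s \pmod{\text{holomorphic and off-diagonal terms}},$$
so the residue on the $(\tau_p^i\nu_p)$-eigendirection is $(i-m_p)/N_p$.

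\emph{Assembly.} By Frobenius reciprocity, the multiplicity of the $(\tau_p^i\nu_p)$-eigenspace in $V_\xi|_y$ is $\xi(1)\langle \xi|_{G_p},\nu_p\tau_p^i\rangle_{G_p}$. Weighting the residue $(i-m_p)/N_p$ by these multiplicities and summing, then summing over $G$-orbits of ramification points (using orbit size $N/N_p$, the identity $\sum_p m_p = \deg L$, and the character identity $\sum_i \langle \xi|_{G_p},\nu_p\tau_p^i\rangle = \xi(1)$ which isolates the $\deg L$-piece), one arrives at the desired formula. The technical heart of the argument, and the principal obstacle, is precisely this bookkeeping: matching residue values, orbit sizes, and eigenspace multiplicities against the definition of $m_{\xi,p}(L)$ and against the combinatorics of the induced representation.
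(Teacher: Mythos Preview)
Your proposal follows essentially the same route as the paper: decompose $\pi_*\cL$ into isotypic pieces, reduce via Riemann--Roch on $Y$ to computing $\deg V_\xi$, push forward a $G$-invariant logarithmic connection on $L$ to a Gauss--Manin connection, diagonalize its residue at a branch point in the basis $x^i s$, and assemble using Frobenius reciprocity after splitting the eigenvalue $(i-m_p)/N_p$ into its ``$i$-part'' (which produces the $m_{\xi,p}$) and its ``$m_p$-part'' (which, summed, gives $\deg L/N$). The only cosmetic differences are that you build the invariant connection directly from an invariant rational section (rather than via the tensor-power argument of Lemma~\ref{lemma:Gconnection}) and you introduce the multiplicity bundle $W_\xi$ explicitly, which makes the $\xi(1)$-bookkeeping a bit cleaner than in the paper.
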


\begin{rmk}

  \begin{enumerate}
    \-
    \item This is a special case of a more general result due to Ellingsrud and
  L{\o}nsted \cite{el}. When 
  $L=K^{\otimes n}$, the formula goes back to  Chevalley and Weil \cite{cw}.
\item The sums $m_\xi(L)$ are clearly finite since $m_{\xi,p}(L)=0$
  unless $G_p\not=\{1\}$. We also note that $m_{\xi, p}(L)$ only
  depends on the orbit by the arguments of section 4. So we can rewrite
  $$m_\xi(L)=\sum_{q\in Y} |G_p|m_{\xi, p}(L)$$
  where we choose one $p\in X$ over each $q$.
  
\item The formula can be made more explicit in certain cases, see
section 5.
\end{enumerate}

  \end{rmk}

\begin{cor}\label{cor:free}
If $G$ acts freely, then 
$$\chi_G(L) = \left(\frac{1}{N}\deg L + 1-h\right)\chi_{reg}$$
In particular, $\chi_G(L)$ is a multiple of the regular representation.
\end{cor}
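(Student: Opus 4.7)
The plan is to derive this corollary as an immediate specialization of Theorem \ref{thm:cw}. The only input needed is that a free action forces every isotropy group to be trivial.

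First I would note that ``$G$ acts freely'' means $G_p = \{1\}$ for every $p \in X$, so $N_p = 1$. Plugging this into the definition of $m_{\xi,p}(L)$ given in Theorem \ref{thm:cw}, the index $i$ in the inner sum runs only over $i = 0, \ldots, N_p - 1 = 0$; that sole term carries an explicit factor of $i$, so it vanishes. Hence $m_{\xi,p}(L) = 0$ for every $p \in X$ and every $\xi \in \G$.

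Summing over $p$ gives $m_\xi(L) = 0$ for all $\xi$, so the entire correction sum in \eqref{eq:CW} drops out, leaving exactly the stated identity. The final sentence of the corollary is then just the observation that $\chi_{reg} = \sum_{\xi \in \G} \xi(1)\xi$ is, by definition, a sum of characters of irreducibles with integer (hence complex) coefficients, so a scalar multiple of it is a multiple of the regular representation's character. There is essentially no obstacle here: the argument is the bookkeeping point that the inner sum contains no nonzero term when $N_p = 1$, and Theorem \ref{thm:cw} does all the real work.
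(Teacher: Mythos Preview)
Your proof is correct and is exactly the intended argument: the paper states the corollary without proof immediately after Theorem~\ref{thm:cw}, precisely because the free action forces $N_p=1$ and hence the inner sum in $m_{\xi,p}(L)$ collapses to its $i=0$ term, which vanishes. One could add that the scalar $\tfrac{1}{N}\deg L + 1-h$ is in fact an integer (pair the identity with the trivial character, using that $\chi_G(L)$ is the character of a virtual representation), but this is a cosmetic sharpening.
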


\section{The Residue theorem}

Let $V$ be a holomorphic vector bundle on compact Riemann surface $Y$, and let
$\V = \OO_Y(V)$ be its sheaf of holomorphic sections. Let $\Omega_Y^1$
be the sheaf of holomorphic $1$-forms.
A holomorphic connection on $V$ (or $\V$) is a $\C$-linear map
$$\nabla:\V\to \Omega_Y^1\otimes \V$$
satisfying the Leibnitz rule.
 Note that the curvature necessarily vanishes, because it locally
a matrix of  holomorphic $2$-forms. Therefore by Chern-Weil theory  \cite[chap 3, \S3]{gh},
the degree $\deg V:= \int_Yc_1(V)=0$. This provides an obstruction to the existence of such a connection.
We can avoid it by allowing singularities.
Given a finite set $S\subset Y$, let  $\Omega_Y^1(\infty S)$
($\Omega_X^1(*S)$,  resp. $\Omega_Y^1(\log S)$)  be the sheaf
of holomorphic forms on  $Y-S$ (with
poles of finite order, resp. order at most $1$, on points of $S$). Given a vector bundle
$V$ on $Y$, a $\C$-linear map
$$\nabla:\V\to \Omega_Y^1(\infty S)\otimes \V$$
satisfying the Leibnitz rule will be referred to as a singular holomorphic
connection with singularities along $S$.  We call $\nabla$ meromorphic
(resp. logarithmic) if it takes values in $\Omega_Y^1(*S)\otimes \V$ (resp. $ \Omega_Y^1(\log S)\otimes \V$).
Any vector bundle $V$ carries a meromorphic connection, namely
 $\nabla = d$  where $V|_{X-S}\cong \OO_{X-S}^n$ is a local trivialization of the underlying algebraic
 vector bundle (which exists by GAGA).
 When $V$ is a line bundle, we will give a more precise construction in lemma \ref{lemma:Gconnection}.

The standard reference for logarithmic connections is Deligne
\cite{deligne}, although we will not really need much beyond the definition of residue, which we now explain.
Given $s\in S$, and a basis $\{v_i\}$ of the stalk $\V_s$ as an
$\OO_s$-module, a singular holomorphic 
connection is determined by a matrix of $1$-forms $A=(\alpha_{ij})$ given by
$$\nabla v_i =\sum_j \alpha_{ij}\otimes v_j$$
 The forms $\alpha_{ij}$ are holomorphic in a punctured disk about
 $s$. Therefore given a local
parameter $y$ at $s$,
we can form a Laurent expansion
$$A= \sum_{n=-\infty}^\infty A_ny^ndy$$
where the coefficients are constant matrices.
In the logarithmic case, $A_{-1}$ gives  a well
defined endomorphism of the fibre $ End(V_s)$ \cite[p 78]{deligne}.
For a general singular holomorphic connection, we claim something weaker:

\begin{lemma}
 Given a singular holomorphic connection,
the trace $\tr A_{-1}\in \C$ is well defined, i.e. it
is independent of the choice of basis of $\V_s$ and local parameter.
\end{lemma}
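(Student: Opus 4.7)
The plan is to verify the two forms of independence separately: first independence under change of $\OO_s$-basis of $\V_s$, and second independence under change of local parameter $y$ at $s$. These two checks decouple, because for a fixed basis the trace $\tr A$ is already a single $\C$-valued form on a punctured disk, at which point coordinate independence reduces to the classical residue theorem.

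For the change of basis, suppose $\{v_i'\}$ is a second $\OO_s$-basis with $v_i' = \sum_j g_{ij} v_j$, so that $G = (g_{ij}) \in GL_n(\OO_s)$. A direct application of the Leibnitz rule gives the transformation
$$A' = G A G^{-1} + (dG)\,G^{-1}.$$
Taking traces and using cyclicity yields
$$\tr A' = \tr A + \tr((dG)\,G^{-1}) = \tr A + d\log\det G.$$
Since $\det G \in \OO_s^\times$, the form $d\log\det G$ is holomorphic at $s$, so its $y^{-1}dy$-coefficient vanishes, and $\tr A$ and $\tr A'$ have the same residue. For the change of parameter, once the basis is fixed, $\tr A$ is a single holomorphic $1$-form on the punctured disk around $s$, and its $y^{-1}dy$-coefficient equals
$$\frac{1}{2\pi i}\oint_{|y|=\epsilon} \tr A,$$
a contour integral manifestly independent of the choice of local coordinate.

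The subtlety to watch, and the closest thing to an obstacle, is that we are not assuming $\nabla$ is logarithmic or even meromorphic, so individual matrix entries $\alpha_{ij}$ may have essential singularities at $s$. In particular, $A_{-1}$ itself cannot be interpreted as an endomorphism of the fibre $V_s$, as it is in the logarithmic case treated in Deligne. The key observation that rescues the statement is that taking the trace collapses the change-of-basis ambiguity to the single scalar exact form $d\log\det G$, which is automatically holomorphic because $\det G$ is a unit; this is what makes $\tr A_{-1}$ well defined even though $A_{-1}$ is not.
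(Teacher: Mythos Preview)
Your proof is correct and follows essentially the same approach as the paper: the change-of-basis formula $A' = GAG^{-1} + (dG)G^{-1}$ plus Cauchy's integral formula for coordinate independence. Your use of the Jacobi identity $\tr((dG)G^{-1}) = d\log\det G$ makes the vanishing of the extra residue a bit more explicit than the paper's version, but the argument is otherwise identical.
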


\begin{proof}
  Suppose that $v_i'=\sum
f_{ij}v_j$ is a new basis. If $F= (f_{ij})$, then the new connection matrix is given by $A'= FAF^{-1}
+ dF\cdot F^{-1}$. Therefore $\tr A'= \tr A+ \tr(dF\cdot F^{-1})$, and so
$\tr A_{-1}'=\tr A_{-1}$. Cauchy's integral formula
shows that $\tr A_{-1}$ is independent of the local parameter.
\end{proof}

We denote $\tr A_{-1}$ by $\tr {\Res}_s(\nabla)$.

\begin{thm}\label{thm:deg}
  If $(V,\nabla)$ is vector bundle equipped with  a singular holomorphic connection
  as above, then
  $$\deg V= -\sum_{s\in S} \tr {\Res}_s(\nabla)$$
\end{thm}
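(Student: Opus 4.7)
The plan is to reduce to the case of a line bundle by passing to the determinant, then apply the classical residue theorem for $1$-forms on a compact Riemann surface.

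First, I would observe that $\nabla$ on $V$ of rank $r$ induces a connection $\det\nabla$ on $\det V = \Lambda^r V$: in any local frame $v_1,\dots,v_r$ with connection matrix $A=(\alpha_{ij})$, a one-line Leibniz computation gives $(\det\nabla)(v_1\wedge\cdots\wedge v_r) = (\tr A)\otimes(v_1\wedge\cdots\wedge v_r)$, since only the $j=i$ term survives each exterior product. Consequently ${\Res}_s(\det\nabla) = \tr {\Res}_s(\nabla)$, and since $\deg V = \deg\det V$, it suffices to prove the theorem when $V$ is a line bundle $L$.

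For a line bundle $L$, the idea is to produce a global $1$-form whose residues encode both $\deg L$ and the residues of $\nabla$. By GAGA, $L$ has a nonzero meromorphic section $\sigma$, and $\deg L = \sum_{p\in Y}\mathrm{ord}_p(\sigma)$. I would then form the logarithmic derivative $\eta := \nabla\sigma/\sigma$: locally, writing $\sigma = fe$ for a holomorphic frame $e$ with $\nabla e = \omega\otimes e$, this is $d\log f + \omega$, which is visibly independent of the choice of frame and hence glues to a global $1$-form on $Y$, holomorphic outside the finite set $T := S\cup\{\text{zeros and poles of }\sigma\}$. A short local Laurent calculation gives ${\Res}_p(\eta) = \mathrm{ord}_p(\sigma)$ for $p\in T\setminus S$ and ${\Res}_p(\eta) = \mathrm{ord}_p(\sigma) + {\Res}_p(\nabla)$ for $p\in S$. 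Summing and applying the residue theorem yields
$$0 = \sum_{p\in T}{\Res}_p(\eta) = \deg L + \sum_{s\in S}{\Res}_s(\nabla),$$
which is the desired identity.

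The one point requiring care is that the residue theorem applies in this slightly unusual setting: the connection matrix is only assumed holomorphic on a punctured disk about each $s\in S$, so $\omega$ could a priori have an essential singularity at $s$. However, the sum-of-residues statement holds for any $1$-form holomorphic off a finite set on a compact Riemann surface, by applying Stokes' theorem to $Y$ with small disks around the singular points removed and using that any holomorphic $1$-form on a $1$-dimensional complex manifold is automatically closed. This is the only non-routine ingredient; the determinant reduction and the local residue computations are straightforward.
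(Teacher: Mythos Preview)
Your argument is correct and takes a genuinely different route from the paper's proof. The paper works directly with the rank-$r$ bundle: it patches $\nabla$ to a global $C^\infty$ connection $\nabla'$ using a partition of unity, then invokes the Chern--Weil formula $\deg V=\frac{i}{2\pi}\int_Y\tr\Theta$; since $\nabla$ is holomorphic (hence flat) away from $S$, the curvature integral localizes to small disks around the $s\in S$, and Stokes plus Cauchy give each local contribution as $-\tr\Res_s(\nabla)$. Your approach instead reduces to rank one by passing to $\det V$, then replaces the Chern--Weil definition of degree by the divisor-of-a-meromorphic-section definition, so that the whole statement becomes the classical sum-of-residues theorem applied to the globally defined $1$-form $\nabla\sigma/\sigma$. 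The trade-off is that your proof is more elementary---no $C^\infty$ connections, no curvature, no Chern--Weil---while the paper's argument stays closer to the differential-geometric framework and handles the higher-rank case without the determinant reduction; in particular, the paper's method is the one that generalizes naturally to higher-dimensional base varieties (as in the Esnault--Verdier result cited in the remark). Your observation that the residue theorem still applies when the local connection form has an essential singularity is exactly right, and it is essentially the same Stokes-on-the-punctured-surface step that appears at the end of the paper's proof.
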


\begin{rmk}
  \-
  \begin{enumerate}
  \item The theorem generalizes the well known fact that the sum of residues
    of meromorphic $1$-form $\omega$ is zero \cite[p 222]{gh}. This
    follows by applying the theorem when $V$ is trivial with
    connection $d+\omega$.
    \item When the connection is logarithmic,
  this is a special case of a theorem of Esnault and Verdier \cite[appendix B]{ev}. 
  \end{enumerate}
  
\end{rmk}
\begin{proof}
  Choose $\epsilon>0$, so that the closed disks  $D_s(\epsilon)$ of
  radius $\epsilon$ (with respect to a metric) around $s$ are disjoint and
  contained in coordinate neighbourhoods. By a standard partition of
  unity argument, we can construct a $C^\infty$
  connection $\nabla'$ on $V$ which agrees with $\nabla$ on $\bigcup_s
  D_s(\epsilon/2)$.
  The curvature $\Theta$ of $\nabla'$ must vanish on $Y-\bigcup_s
  D_s(\epsilon)$, because it coincides with the curvature of $\nabla$
  on this set. By the Chern-Weil formula \cite[chap 3, \S3]{gh}
  $$\deg V= \frac{i}{2\pi} \int_Y \tr(\Theta)= \sum_{s\in S}
  \frac{i}{2\pi} \int_{D_s(\epsilon)} \tr(\Theta)$$
Let $B_s=(\beta_{ij})$ denote the connection matrix for $\nabla'$ with respect to a
trivialization of $V$ on $D_s(\epsilon)$. Then
$$\Theta= dB_s+ B_s\wedge B_s$$
by the usual formula for curvature (see \cite[p 401]{gh} although the 
sign there differs from ours).
Therefore
$$\tr \Theta= \tr(dB_s) + \sum_{ij}\beta_{ij}\wedge\beta_{ij}= d \tr
B_s$$
Since $B_s$ is the
connection matrix of $\nabla$ on the annulus $D_s(\epsilon)- D_s(\epsilon/2)$,
$$\tr B_s = (y^{-1}\tr {\Res}_s \nabla+ f(y))dy$$
on the annulus, where $f$ is holomorphic on
$D_s(\epsilon)-\{s\}$ with zero residue at $s$.
Therefore by Stokes' and Cauchy's theorems
$$  \frac{i}{2\pi} \int_{D_s(\epsilon)} \tr(\Theta)=  -\frac{1}{2\pi i}
\int_{\partial D_s(\epsilon)} \tr(B_s)= -\tr {\Res}_s(\nabla)$$

\end{proof}

\begin{cor}
  $$\sum_{s\in S} \tr {\Res}_s(\nabla)\in \Z$$
\end{cor}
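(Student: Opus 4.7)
The plan is to observe that this corollary is an immediate consequence of Theorem \ref{thm:deg}. That theorem gives the identity
$$\deg V = -\sum_{s\in S} \tr \Res_s(\nabla),$$
so the sum in question equals $-\deg V$. Since the degree of a holomorphic vector bundle on a compact Riemann surface is by definition the integer $\int_Y c_1(V) \in \Z$, the conclusion follows.

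There is essentially no obstacle here; the content was already extracted in the proof of Theorem \ref{thm:deg}, and the corollary is just the integrality statement implicit in identifying a sum of \emph{a priori} complex numbers with minus an integer topological invariant. I would simply write one sentence: by Theorem \ref{thm:deg}, the sum equals $-\deg V \in \Z$.
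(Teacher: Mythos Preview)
Your proposal is correct and matches the paper's intent: the corollary is stated without proof immediately after Theorem~\ref{thm:deg}, precisely because it follows in one line from the identity $\deg V = -\sum_{s\in S}\tr\Res_s(\nabla)$ together with the fact that $\deg V\in\Z$.
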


\section{Proof of Chevalley-Weil}

Let $\pi:X\to Y$ be a nonconstant holomorphic map between compact Riemann surfaces.
Suppose that $L$ is a holomorphic  line bundle on $X$ with a logarithmic  connection
$$\nabla:L\to \Omega_X^1(\log S)\otimes L$$
After increasing $S$ if necessary, we can assume that it contains all
the ramification points of $\pi$.
Then $T=\pi(S)$ will contain
 the branch points. Let $\cL = \OO_X(L)$, and $\V=\pi_*\cL$.
 The direct image
 $$\V=\pi_*\cL \xrightarrow{\pi_*\nabla} \pi_*(\Omega_X^1(\log
 S)\otimes \cL)\cong \Omega_Y^1(\log T)\otimes \V$$
 defines a  logarithmic  connection$\tilde \nabla$, which we refer to
as the Gauss-Manin connection. 

We give a local description.
Choose $q\in Y$, and let  $\{p_1,\ldots, p_m\}= \pi^{-1}(q)$. Let $n_i$ denote the ramification 
index of $p_i$. Choose local  coordinates about $p_i$ and $q$ so that
$y=x^{n_i}$. Let us also choose a local
generator $\lambda$ for $L$ at $p_i$. Let $\W_i$ be  the free $\OO_{p_i}$-module with basis $\lambda, x\lambda,\ldots, x^{n_i-1}\lambda$.
Then 
$$\V_q= \bigoplus_{i=1}^m \W_i$$
The  Gauss-Manin connection respects this decomposition. We will compute it on $\W_i$.
Let us suppose that $\nabla$ has residue $r_i$ at $p_i$. 
We can write
 \begin{equation}
 \label{eq:GM}
   \begin{split}
     \tilde \nabla(x^j\lambda) &=\frac{dx}{x} \otimes r_ix^j\lambda + \frac{dx}{x} \otimes jx^j
     \lambda +\ldots\\
     &=\frac{dy}{y} \otimes\left(\frac{r_i+j}{n_i}\right) x^j\lambda + \ldots
   \end{split}
 \end{equation}
 where the omitted terms are holomorphic.

\begin{lemma}
$$\tr {\Res}_q\tilde \nabla = \sum_{i=1}^m {\Res}_{p_i} \nabla + \frac{1}{2}\sum_{i=1}^m (n_i-1)$$

\end{lemma}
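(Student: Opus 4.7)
The plan is to read off the residue of $\tilde\nabla$ at $q$ directly from the local formula (\ref{eq:GM}), using the decomposition $\V_q = \bigoplus_{i=1}^m \W_i$.

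First, since $\tilde\nabla$ preserves each summand $\W_i$, its connection matrix at $q$ is block diagonal, so
$$\tr\Res_q\tilde\nabla \;=\; \sum_{i=1}^m \tr\Res_q\bigl(\tilde\nabla|_{\W_i}\bigr),$$
and it suffices to handle one $i$ at a time. On $\W_i$ the distinguished $\OO_q$-basis is $\lambda, x\lambda,\ldots,x^{n_i-1}\lambda$, and equation (\ref{eq:GM}) asserts that
$$\tilde\nabla(x^j\lambda) \;=\; \frac{r_i+j}{n_i}\,\frac{dy}{y}\otimes x^j\lambda \;+\; (\text{holomorphic terms}),$$
so the diagonal entries of the connection matrix pick up a simple pole with the stated residues, and I only need to know that the off-diagonal entries contribute no pole.

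Second, I would verify that verification. The holomorphic remainders at $p_i$ come from $\nabla\lambda - r_i\frac{dx}{x}\otimes\lambda$ multiplied by powers of $x$, i.e.\ from expressions of the form $x^k\,dx\otimes\lambda$ with $k\ge 0$. Using $dx = \frac{1}{n_i}xy^{-1}dy$ (from $y=x^{n_i}$), one computes $x^k\,dx = \frac{1}{n_i}x^{k+1}y^{-1}dy$; when $k+1<n_i$ this is a genuinely holomorphic $1$-form times a basis vector $x^{k+1}\lambda$, and when $k+1=n_i$ one uses $x^{n_i}=y$ to get $\frac{1}{n_i}dy\otimes\lambda$, again holomorphic. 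So the off-diagonal contributions are holomorphic in $y$, and the residue matrix of $\tilde\nabla|_{\W_i}$ is diagonal with entries $(r_i+j)/n_i$ for $j=0,\ldots,n_i-1$.

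Third, I finish with the arithmetic
$$\tr\Res_q\bigl(\tilde\nabla|_{\W_i}\bigr) \;=\; \sum_{j=0}^{n_i-1}\frac{r_i+j}{n_i} \;=\; r_i + \frac{1}{n_i}\cdot\frac{n_i(n_i-1)}{2} \;=\; r_i + \frac{n_i-1}{2},$$
and sum over $i$, using $r_i = \Res_{p_i}\nabla$, to recover the claimed identity. There is no real obstacle here; the only subtlety worth recording is the check in the previous paragraph that the holomorphic pieces at $p_i$ remain holomorphic at $q$ after being rewritten in the $\OO_q$-basis of $\W_i$, which is what licenses reading the residue straight off the diagonal of (\ref{eq:GM}).
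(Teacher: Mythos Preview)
Your overall approach is exactly the paper's: read the diagonal residues off \eqref{eq:GM} and sum. The arithmetic in your third paragraph is correct and matches the paper verbatim.

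There is, however, a slip in your second paragraph. You write that $x^k\,dx=\frac{1}{n_i}x^{k+1}y^{-1}dy$ is ``a genuinely holomorphic $1$-form times a basis vector $x^{k+1}\lambda$'' when $k+1<n_i$. But the coefficient $\frac{1}{n_i}y^{-1}dy$ has a simple pole at $q$, so this is \emph{not} holomorphic. Concretely, if $\nabla\lambda=(r_i\frac{dx}{x}+g(x)\,dx)\otimes\lambda$ with $g(x)=\sum_{k\ge 0}a_kx^k$, then the remainder in $\tilde\nabla(x^j\lambda)$ contains terms $a_kx^{j+k}dx\otimes\lambda=\frac{a_k}{n_i}y^{q-1}dy\otimes x^s\lambda$ (writing $j+k+1=qn_i+s$, $0\le s<n_i$), and whenever $j+k+1<n_i$ this contributes $\frac{a_k}{n_i}$ to the residue in the off-diagonal position $(s,j)$ with $s=j+k+1\ne j$. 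So the residue matrix of $\tilde\nabla|_{\W_i}$ is generally \emph{not} diagonal.

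The fix is immediate: since $k+1\ge 1$, these polar contributions always land strictly off the diagonal, so they do not affect $\tr\Res_q$. You should replace the claim ``the residue matrix is diagonal'' by ``the polar part of the remainder lies strictly off the diagonal, hence contributes nothing to the trace.'' With that correction your argument is complete and coincides with the paper's (which simply reads the trace from the diagonal of \eqref{eq:GM} without comment).
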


\begin{proof}
  Using \eqref{eq:GM}, we obtain
$$\tr {\Res}_q\tilde \nabla = \sum_{i=1}^m \sum_{j=0}^{n_i-1}\frac{r_i+j}{n_i}=  \sum_{i=1}^m \left( r_i+ \frac{n_i-1}{2} \right)$$

\end{proof}

\begin{cor}
 Let $R$ be the ramification divisor of $\pi$ then
$$
   \deg \V = \deg L -\frac{1}{2}\deg R
   $$
\end{cor}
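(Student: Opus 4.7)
The plan is to apply Theorem \ref{thm:deg} twice: once to the pair $(\V,\tilde\nabla)$ on $Y$, and once to the original pair $(L,\nabla)$ on $X$, then to identify the difference using the preceding lemma.

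First I would note that since $\pi$ is a nonconstant map of compact Riemann surfaces, it is finite, so $\V = \pi_*\cL$ is a locally free $\OO_Y$-module of rank $\deg\pi$, and $\tilde\nabla$ is a (logarithmic, hence singular holomorphic) connection with singularities along $T=\pi(S)$. Theorem \ref{thm:deg} therefore applies and gives
$$\deg\V = -\sum_{q\in T}\tr\Res_q(\tilde\nabla).$$
Substituting the formula from the preceding lemma and splitting the sum,
$$\deg\V = -\sum_{q\in T}\sum_{p_i\in\pi^{-1}(q)}\Res_{p_i}\nabla \;-\;\tfrac12\sum_{q\in T}\sum_{p_i\in\pi^{-1}(q)}(n_i-1).$$

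Next I would rewrite each of the two double sums as a single sum over $X$. For any point $p\in X$ with $p\notin S$, we have $\Res_p\nabla=0$, and since $S$ contains all ramification points, such $p$ is also unramified so $n_p-1=0$; so extending the inner sum over all preimages of $q$ (ramified or not) is harmless, and the double sums simply reassemble as sums over $p\in X$. Thus the first double sum equals $\sum_{p\in S}\Res_p\nabla$, and by Theorem \ref{thm:deg} applied to $(L,\nabla)$ this equals $-\deg L$. The second double sum equals $\sum_{p\in X}(n_p-1) = \deg R$ by definition of the ramification divisor.

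Combining these identifications yields
$$\deg\V = -(-\deg L) - \tfrac12\deg R = \deg L - \tfrac12\deg R,$$
as desired. The only real content is the bookkeeping in the previous paragraph — matching $\sum_{q}\sum_{i}(n_i-1)$ with $\deg R$ and noting that points outside $S$ do not contribute — everything else is a direct citation of Theorem \ref{thm:deg} and the lemma just proved.
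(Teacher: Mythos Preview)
Your proof is correct and follows exactly the route the paper has in mind: the paper's own proof is the single sentence ``This follows from the lemma and theorem~\ref{thm:deg},'' and what you have written is precisely the unpacking of that sentence, applying theorem~\ref{thm:deg} once on $Y$ to $(\V,\tilde\nabla)$ and once on $X$ to $(L,\nabla)$, with the lemma supplying the comparison of residues. Your bookkeeping about points in $\pi^{-1}(T)\setminus S$ contributing zero to both sums is the only detail not made explicit in the paper, and it is handled correctly.
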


\begin{proof}
 This follows from the lemma and   theorem \ref{thm:deg}.
\end{proof}

Let us now assume that $G\subseteq Aut(X)$ is a subgroup of order $N$, and that
 $Y=X/G$. The ramification index of  $p\in X$ is exactly $N_p=|G_p|$.
In particular, $p$ is ramified if and only if it is a fixed point for
some $g\not=1$. The
fibre $\pi^{-1}(\pi(p))$ is precisely the orbit $G\cdot p$.

\begin{lemma}\label{lemma:Gconnection}
  If $L$ is a $G$-equivariant line bundle on $X$, then there exists a $G$-invariant logarithmic connection
  $$\nabla:L\to \Omega_X^1(\log S) \otimes L$$
  for some invariant $S\subset X$. \end{lemma}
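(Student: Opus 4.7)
The plan is to build $\nabla$ explicitly using the divisor-theoretic description of equivariant line bundles recalled in one of the examples above: since $L$ is equivariant, we have $L \cong \OO_X(D)$ for some $G$-invariant divisor $D = \sum n_p p$. The constant function $1$, viewed as a rational section $s$ of $\OO_X(D)$, has divisor $-D$, and we use it as a meromorphic frame from which to define $\nabla$.

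Set $S = \mathrm{supp}(D)$; this is $G$-invariant because $D$ is. On $X \setminus S$ the section $s$ is a nonvanishing holomorphic frame of $L$, and we declare $\nabla s = 0$, extending by the Leibniz rule to $\nabla(fs) = df \otimes s$ for holomorphic $f$. The one technical point is to verify that this $\nabla$ extends to a logarithmic connection at each $p \in S$. A short local computation handles this: in a local parameter $y$ at $p$ and a holomorphic frame $e$ of $L$, one has $s = y^{-n_p} e$, and combining $\nabla s = 0$ with Leibniz forces
\[
\nabla e \;=\; n_p \,\frac{dy}{y}\otimes e,
\]
which is logarithmic at $p$ with residue $n_p$.

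For $G$-invariance, recall from the discussion around \eqref{eq:G} that any two linearizations of a given equivariant line bundle differ by a character $\chi \colon G \to \C^*$, and that the natural linearization on $\OO_X(D)$ (by pull-back of functions) fixes the rational section $s=1$. Under any linearization one therefore has $g \cdot s = \chi(g) s$ for some character, and a direct application of Leibniz shows that the prescription $\nabla s = 0$ is preserved by $g$, so that $\nabla$ commutes with the $G$-action. The main obstacle is the local extension step in the second paragraph; if one prefers not to rely on the identification $L \cong \OO_X(D)$, an alternative is to take any logarithmic connection on $L$ and average it over $G$, first enlarging its pole locus to be $G$-stable so that the translates $g^*\nabla$ all lie in a common affine space of logarithmic connections on $L$.
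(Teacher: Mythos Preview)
Your argument is correct. One cosmetic slip: with the paper's conventions the local holomorphic frame of $\OO_X(D)$ near $p$ is $e=y^{-n_p}$, so the rational section $s=1$ satisfies $s=y^{\,n_p}e$ rather than $y^{-n_p}e$, and the residue of $\nabla$ at $p$ comes out as $-n_p$; this does not affect the conclusion that $\nabla$ is logarithmic.

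Your route is genuinely different from the paper's. The paper does not invoke Borne's description of $Pic_G(X)$. Instead, assuming first that $\deg L<0$, it chooses a nonzero holomorphic section $\sigma'$ of some $L^{\otimes -m}$, forms the tensor product $\sigma^\vee=\bigotimes_{g\in G} g^*\sigma'$ to obtain a $G$-\emph{invariant} section of $L^{\otimes -n}$ with $n=mN$, and then sets $\nabla\lambda=\tfrac{1}{n}\,d\log\sigma(\lambda^{\otimes n})\otimes\lambda$ (the factor $1/n$ is missing from the paper's displayed formula but is needed for the Leibniz rule); the general case is handled by writing $L=L_1\otimes L_2^{-1}$ with $\deg L_i<0$. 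Your approach is shorter and more transparent, since it exploits the equivariant-divisor description already quoted in the paper and deals cleanly with the possible twist of the linearization by a character; the paper's approach is more self-contained in that it avoids citing Borne, at the cost of the detour through tensor powers and the case split on degree. Your closing alternative---build any logarithmic connection, enlarge $S$ to a $G$-stable set, and average over $G$---is also valid, and is arguably the most conceptual of the three, since the logarithmic connections with fixed pole locus form an affine space on which $G$ acts.
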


\begin{proof}
First suppose that $\deg L <0$, then for $m\gg 0$, $L^{\otimes - m}$ will have a nonzero section $\sigma'$.
Then $\sigma^\vee= \sigma'\otimes \ldots \otimes \sigma'$ will give an invariant section of $L^{\otimes -n}$, where $n=mN$.
The dual gives an invariant injective map
$\sigma:\OO_X(L^{\otimes n})\to\OO_X$. Then $\nabla(\lambda) =
d\sigma(\lambda^{\otimes n})/\sigma(\lambda^{\otimes n})\otimes \lambda$ will have the desired properties.
  We can do the  general case, by writing $L= L_1\otimes L_2^{-1}$,
  with $\deg L_i< 0$,
  and using the tensor product and dual connections.
\end{proof}

 The direct image $\V= \pi_*\cL$ is locally free with $G$-action. We
 can decompose
 \begin{equation} 
   \label {eq:sumV}
   \V= \bigoplus_{\xi \in \G} \V_\xi, \> \V_\xi = e_\xi \V   
 \end{equation}
We have
 $$e_\xi H^i(X,\cL) = H^i(Y, \V_\xi)$$
 Therefore by Riemann-Roch
 \begin{equation}
   \label{eq:RR}
   \langle\chi_G(L), \xi\rangle = \chi(Y, \V_\xi) = \deg \V_\xi +
 \xi(1)(1-h)
\end{equation}
where $\chi(Y, \V_\xi)$ is the Euler characteristic of $\V_\xi$.
 The proof of  theorem \ref{thm:cw} will follow from the computation of
 the degree $\deg \V_\xi$, which we now explain.

 Let us choose an invariant connection
 $$\nabla:L\to \Omega_X^1(\log S) \otimes L $$
 as in the above lemma. Then the Gauss-Manin connection $\tilde
 \nabla$ is $G$-invariant, so we can decompose $\tilde \nabla$ as a
 sum $\bigoplus \tilde \nabla_\xi$ with respect to \eqref{eq:sumV}.
 Choose $p=p_1\in T$, and let $q=\pi(p)$.
    Let  $n= N_{p}$, $m= N/n$,  and$\{p_1,\ldots, p_m\}= G\cdot p_1=
 \pi^{-1}(q)$.  The modules with connection $(\W_i, \tilde \nabla|_{\W_i})$ are now
 isomorphic, so it suffices to look at $\W=\W_1$. Let $W =
 \C\otimes_{\OO_p} \W$, and $ V=  \C\otimes_{\OO_q} \V_q$ denote
 the fibres. The first vector space $ W$ is a $\C[G_p]$-module. The basis
 vectors $x^i\lambda$ span $G_p$-submodules  $\C_{\nu\tau^i}$  with character
 $\nu\tau^i$. Therefore
 $$ W= \bigoplus_{i=0}^{n-1} \C_{\nu\tau^i}$$
  We note that $\{\nu, \nu\tau,\ldots, \nu\tau^{n-1}\}=
  \G_p$. Since $G_p$  is abelian, the $G_p$-module
  $$ W \cong \C[G_p]$$
The vector space $ V$ is the $\C[G]$-module
induced from the $G_p$-module $ W$. Consequently $ V\cong
\C[G]$. It will useful to weight the summands, by considering  the class function
\begin{equation}
  \label{eq:rho}
\rho_q = \sum_{i=0}^{n-1} \left(\frac{r+i}{n}\right) \Ind(\nu\tau^i)  
\end{equation}
We can see from \eqref{eq:GM} and the above remarks that
\begin{equation}
  \label{eq:trR}
\tr {\Res}_q \tilde\nabla_\xi = \langle \rho_q, \xi\rangle_G  
\end{equation}

\begin{proof}[Proof of theorem \ref{thm:cw}]
 It suffices to  prove that
 $$\langle \chi_G(L),\xi\rangle =\left(\frac{1}{N}\deg L + 1-h \right)\xi(1)-m_\xi(L)$$
 for every $\xi\in \G$. By comparing with \eqref{eq:RR}, we see that
 if suffices to prove that
 \begin{equation}
   \label{eq:degVxi}
 \deg \V_\xi =\frac{1}{N}(\deg L) \xi(1)-m_\xi(L)   
 \end{equation}

 By  theorem~\ref{thm:deg},
 $$\deg \V_\xi = -\sum_{q\in T} \tr {Res}_q \tilde \nabla_\xi =
 -\sum_{q\in T}
 \langle \rho_q, \xi\rangle_G$$
  Let us write
  $\rho_q = \rho_q'+\rho_q''$, where
  $$\rho_q' = \sum_{i=0}^{n-1} \left(\frac{r}{n}\right)
  \Ind(\nu\tau^i)  $$
  $$\rho_q'' = \sum_{i=0}^{n-1} \left(\frac{i}{n}\right)
       \Ind(\nu\tau^i)$$
  As noted above,
  $$\rho_q' =  \left(\frac{r}{n}\right)\Ind  W =
  \left(\frac{r}{n}\right)\chi_{reg}$$
  Therefore
  $$\langle-\rho_q', \xi\rangle = \left( \frac{r}{n}\right) \xi(1) = rm
  \frac{\xi(1)}{N}$$
  The quantities $r$ and $m$ depend on $q$, even though the notation
  does not reflect this.
  Summing over $q$, and using theorem~\ref{thm:deg} yields
  $$\sum_q rm = -\deg L$$
  Therefore
  $$\sum_q\langle-\rho_q', \xi\rangle =\frac{1}{N}(\deg L) \xi(1)$$
  From Frobenius reciprocity, we obtain
  $$\sum_q\langle\rho_q'', \xi\rangle =m_\xi(L)$$
  This proves \eqref{eq:degVxi}.
\end{proof}

\section{Cyclic and dihedral groups}

We make the Chevalley-Weil formula more explicit  in a couple
of cases. With the previous notation, we calculate the local coefficients $m_{\xi,
  p}(L)$ when $G$ is cyclic or dihedral.

Let $G=\langle g|g^N=1\rangle$. Then $\G$ is also a cyclic group of order $N$
generated by
$$\xi(g^j) = \exp (2 \pi i j/N)$$
The dual of the inclusion $G_p\subseteq G$  gives a surjective
homomorphism $\G\to \G_p$. Since the character $\tau_p$ generates $\G_p$, we can assume  without loss of generality that $\xi|_{G_p}=\tau_p$.
The character $\nu_p$ is given by $\xi^m|_{G_p}$ for nonnegative integer $ m<
 N_p$. Given an integer $x$,  let $\overline{x}\in \{0,1,\ldots
 N_p-1\} $ be its  residue  modulo $N_p$.

\begin{prop}\label{prop:cyclic}
  With the notation above
  $$m_{\xi^k,p}(L) =\frac{ \overline{k-m}}{N}$$
\end{prop}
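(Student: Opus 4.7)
The plan is to just substitute everything into the general formula for $m_{\xi,p}(L)$ from Theorem \ref{thm:cw} and exploit the fact that for cyclic groups everything is one-dimensional, so the inner product collapses to a single Kronecker-delta term.

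First I would record that for $\xi = \xi^k$ in the cyclic case we have $\xi^k(1) = 1$, so the general formula reduces to
\[ m_{\xi^k,p}(L) = \frac{1}{N}\sum_{i=0}^{N_p-1} i\, \langle \xi^k|_{G_p},\, \nu_p \cdot \tau_p^i\rangle_{G_p}. \]
Next I would translate the characters appearing inside the inner product into powers of $\tau_p$. Since $\xi|_{G_p}=\tau_p$ by our normalization, $\xi^k|_{G_p}=\tau_p^k$; and since $\nu_p = \xi^m|_{G_p}$, also $\nu_p = \tau_p^m$. Thus the inner product becomes $\langle \tau_p^k, \tau_p^{m+i}\rangle_{G_p}$.

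The key point is that $\tau_p$ generates $\widehat{G_p}$, which is cyclic of order $N_p$, so orthogonality of characters gives
\[ \langle \tau_p^k, \tau_p^{m+i}\rangle_{G_p} = \begin{cases} 1 & \text{if } i \equiv k-m \pmod{N_p},\\ 0 & \text{otherwise.}\end{cases} \]
As $i$ ranges over $\{0,1,\ldots,N_p-1\}$, exactly one value satisfies the congruence, namely $i = \overline{k-m}$. Substituting this into the sum gives $m_{\xi^k,p}(L) = \overline{k-m}/N$ immediately.

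There is essentially no obstacle here; the entire content is the normalization $\xi|_{G_p} = \tau_p$ (which lets us write every character in sight as a power of $\tau_p$) together with orthogonality of characters of the cyclic group $G_p$. The only thing worth being careful about is the convention that $\overline{k-m}$ lies in $\{0,1,\ldots,N_p-1\}$, which matches the index range of $i$ in the sum and guarantees that the unique nonzero term contributes the stated value.
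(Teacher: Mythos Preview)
Your proof is correct and follows essentially the same approach as the paper: both use orthogonality of the characters of the cyclic group $G_p$ to reduce the sum to the single term $i=\overline{k-m}$, after which the result is immediate. Your version is slightly more explicit in recording $\xi^k|_{G_p}=\tau_p^k$ and $\nu_p=\tau_p^m$, but the argument is the same.
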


\begin{proof}
The orthonormality of $\G$ implies that
 \begin{equation}
   \label{eq:innprod}
   \begin{split}
     \langle \xi^k|_{G_p}, \nu_p\cdot \tau_p^i\rangle_{G_p} =
 \begin{cases}
   1 &\text{if $k\equiv i+m\mod N_p$}\\
   0 &\text{otherwise}
 \end{cases}
   \end{split}
 \end{equation}
 Substituting into
 $$m_{\xi^k,p}(L) = \frac{\xi^k(1)}{N} \sum_{i=0}^{N_p-1}
  i\langle \xi^k|_{G_p}, \nu_p\cdot \tau_p^i\rangle_{G_p}$$
proves the proposition.
\end{proof}

Next suppose that $G= D_n$ is the dihedral group of order $N=2n$. This group has a presentation
$$D_n= \langle r,s\mid r^n=1, s^2=1, srs=r^{-1}\rangle$$
Let $R\subset D_n$ be the subgroup generated by $r$.
The characters in $\G$ are easy to write down \cite[\S 5.3]{serre}.
When $n$ is even, 
$$\G= \{\chi_h \mid 0< h<n/2\}\cup \{\psi_i\mid i=1,2,3,4\}$$
where the characters are given by the table
\begin{center}
\begin{tabular}{l|ll} 
  & $r^k$ & $sr^k$ \\ \hline
  $\chi_h$ & $2\cos \frac{2\pi hk}{n}$ & $0$ \\ \hline
 $\psi_1$ & $1$ & $1$ \\ \hline
 $\psi_2$ & $1$ & $-1$ \\ \hline
 $\psi_3$ & $(-1)^k$ & $(-1)^k$ \\ \hline
 $\psi_4$ & $(-1)^k$ & $(-1)^{k+1}$ \\ \hline
\end{tabular}
\end{center}
In more conceptual terms, $\chi_h$ is induced from $\xi^h$, where $\xi$ is a
generator of $\widehat{R}$. We also have $\chi_h|_R = \xi^h+ \xi^{n-h}$.
When $n$ is odd $\psi_3$ and $\psi_4$ are omitted, otherwise $\G$ is
the same.

A  nontrivial cyclic subgroup of $D_n$ is easily seen to be either a
subgroup $R$, or the subgroup $\{1, sr^k\}$
for some $k$. For simplicity, we just treat the case where $G_p\subseteq R$.

\begin{prop}
  Suppose that $G_p\subseteq R$. Lift $\tau_p$ to a generator $\xi\in
  \widehat{R}$, and $\nu_p$ to $\xi^m$. Then
  $$m_{\chi_h,p}(L) =2\left(\frac{ \overline{h-m}+\overline{n-h-m}}{N}\right)$$
  and
  $$m_{\psi_i, p}(L)= \frac{\overline{-m}}{N},\quad i=1,2$$
  When $n$ is even
  $$m_{\psi_i, p}(L)=\frac{\overline{n/2-m}}{N},\,\quad i=3,4$$
\end{prop}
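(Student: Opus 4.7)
My plan is to treat each character of $D_n$ in turn by reducing the computation to the cyclic case already handled in proposition \ref{prop:cyclic}. The key observation is that since $G_p \subseteq R$, the inner product $\langle \xi|_{G_p}, \nu_p \cdot \tau_p^i\rangle_{G_p}$ that appears in theorem \ref{thm:cw} depends only on the restriction of $\xi$ to $R$, which is easy to read off from the character table.

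First I would record the restrictions to $R$: for the two-dimensional characters, $\chi_h|_R = \xi^h + \xi^{n-h}$ by construction; for the one-dimensional characters, $\psi_1|_R$ and $\psi_2|_R$ are trivial, while $\psi_3|_R = \psi_4|_R$ is the character $r^k \mapsto (-1)^k$, i.e. $\xi^{n/2}$ when $n$ is even. Composing with the surjection $\widehat{R} \to \widehat{G_p}$ sending $\xi$ to $\tau_p$, I get the restrictions to $G_p$ in terms of powers of $\tau_p$.

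Next I would invoke the orthogonality relation \eqref{eq:innprod} from the proof of proposition~\ref{prop:cyclic}: for $G_p = \langle \tau_p \rangle$ of order $N_p$,
\[
\langle \tau_p^a, \tau_p^{m+i}\rangle_{G_p} = \begin{cases} 1 & \text{if } i \equiv a-m \pmod{N_p} \\ 0 & \text{otherwise.} \end{cases}
\]
Substituting into the defining sum for $m_{\xi,p}(L)$, the contribution from each $\tau_p^a$ summand of $\xi|_{G_p}$ contributes an $\overline{a-m}$. For $\chi_h$, the two summands $\tau_p^h$ and $\tau_p^{n-h}$ combine with the prefactor $\chi_h(1)/N = 2/N$ to yield $2(\overline{h-m} + \overline{n-h-m})/N$. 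For $\psi_1, \psi_2$ the single summand $\tau_p^0$ gives $\overline{-m}/N$, and for $\psi_3, \psi_4$ (only when $n$ is even) the summand $\tau_p^{n/2}$ gives $\overline{n/2 - m}/N$.

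There is essentially no obstacle here; the whole argument is bookkeeping once the restrictions have been identified. The only minor subtlety is recognizing that the map $\widehat{R} \to \widehat{G_p}$ identifies the lift $\xi^a \in \widehat{R}$ of $\tau_p^a$ consistently, so that the answers are stated in terms of residues modulo $N_p$ rather than modulo $n$; this is handled by the fact that the character $\overline{(\,\cdot\,)}$ lives in $\{0, 1, \dots, N_p - 1\}$ by definition.
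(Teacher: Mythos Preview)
Your proposal is correct and follows essentially the same approach as the paper: both identify the restrictions $\chi_h|_{G_p}=\tau_p^h+\tau_p^{n-h}$ and $\psi_i|_{G_p}$ via the character table, apply orthogonality in $\widehat{G_p}$ as in the cyclic case, and substitute into the defining sum for $m_{\xi,p}(L)$. The paper's proof is simply a terser version of what you wrote.
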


\begin{proof}
From the properties above
$$  \langle \chi_h|_{G_p}, \nu_p\cdot \tau_p^i\rangle_{G_p} =
 \begin{cases}
   1 &\text{if $h\equiv i+m\mod N_p$ or $n-h\equiv i+m\mod N_p$}\\
   0 &\text{otherwise}
 \end{cases}
 $$
 Substituting into the formula for $m_{\chi_h, p}(L)$ yields the first equation.
 The proofs of the  remaining equations are similar.
\end{proof}

\section{The Lefschetz formula}

In this section, we want to explain the relation to
the holomorphic Lefschetz fixed point formula  of Atiyah and
Bott \cite[thm 4.12]{ab}. We refer to their paper for the general
statement, as well  as for some historical information about it.
First let us we briefly digress from Riemann surfaces to explain that
the  conclusion of corollary \ref{cor:free}
is true in much greater generality.

\begin{prop}
 \-
 \begin{enumerate}
\item If a finite group $G$ acts freely on a finite simplicial complex $X$, then
the character  $\chi_G(X)$ of  the virtual representation $\sum (-1)^i H^i(X, \C)$
is an integer multiple of a regular representation.
\item If $G$ acts freely on a compact complex manifold $X$, and $V$ is an equivariant vector bundle, then
the character $\chi_G(V)$ of the virtual representation  $\sum (-1)^i H^i(X,V)$ 
is an integer multiple of a regular representation.
\end{enumerate}

\end{prop}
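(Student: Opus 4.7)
The plan is to reduce both parts to showing that the virtual character vanishes away from the identity; any class function supported at $1$ is automatically a multiple of $\chi_{reg}$, so this immediately pins down the shape of $\chi_G$ and leaves only an integrality check.

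For part (1), after one barycentric subdivision (to guarantee that $G$ acts freely on the set of simplices and not merely on the underlying space), the simplicial chain complex $C_\bullet(X;\C)$ becomes a bounded complex of finite-rank \emph{free} $\C[G]$-modules. Writing $c_i$ for the number of $i$-simplices of the quotient $Y=X/G$, the character of $C_i(X;\C)$ equals $c_i\,\chi_{reg}$, so passing to the alternating sum in the Grothendieck group of $\C[G]$-modules yields
$$\chi_G(X) \;=\; \left(\sum_i(-1)^i c_i\right)\chi_{reg} \;=\; \chi(Y)\,\chi_{reg}.$$

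For part (2) I would invoke the Atiyah--Bott holomorphic Lefschetz formula. Because $G$ acts freely, every $g\ne 1$ has empty fixed point locus in $X$, and Atiyah--Bott therefore gives $\chi_G(V)(g)=0$ for $g\ne 1$. Hence $\chi_G(V)$ is concentrated at the identity, and Fourier inversion gives
$$\chi_G(V) \;=\; \frac{\chi(X,V)}{|G|}\,\chi_{reg}.$$
For the integrality $\chi(X,V)/|G|\in\Z$, note that for every $\xi\in\G$ the pairing $\langle\chi_G(V),\xi\rangle$ is an alternating sum of multiplicities of $\xi$ in $H^i(X,V)$ and is therefore already an integer; specializing to $\xi$ trivial shows that $|G|$ divides $\chi(X,V)$.

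The main obstacle is in part (2), where the argument depends on Atiyah--Bott as an external input. An alternative avoiding it would be to descend $V=\pi^*W$ along the \'etale cover $\pi\colon X\to Y=X/G$, observe that the isotypic pieces of $\pi_*\OO_X$ come from finite-order representations of $\pi_1(Y)$ and therefore have trivial rational Chern character, and then apply Hirzebruch--Riemann--Roch on $Y$ to each isotypic component of $\pi_*V = W\otimes\pi_*\OO_X$; this would recover both the vanishing of $\chi_G(V)(g)$ for $g\ne 1$ and the divisibility $\chi(X,V)=|G|\chi(Y,W)$. For part (1), the only minor technicality is the subdivision step, which is standard.
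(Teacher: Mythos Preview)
Your argument is correct. For part (2) you do exactly what the paper does: invoke the Atiyah--Bott holomorphic Lefschetz formula to see that $\chi_G(V)(g)=0$ for $g\neq 1$, and then conclude that $\chi_G(V)$ is a multiple of $\chi_{reg}$. Your integrality check via $\langle\chi_G(V),\xi\rangle\in\Z$ is a clean addition; the paper appeals instead to \cite[ex.~2.7]{serre}.

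The genuine difference is in part (1). The paper treats both parts uniformly: it quotes the topological Lefschetz fixed point theorem to get $\chi_G(X)(g)=0$ for $g\neq 1$ and then applies the same representation-theoretic step. You instead work directly at the chain level, using that after barycentric subdivision each $C_i(X;\C)$ is a free $\C[G]$-module, so the alternating sum is visibly $\chi(Y)\,\chi_{reg}$. This is more elementary (it is essentially the proof of Lefschetz in this special case rather than an appeal to it) and has the bonus of producing the explicit coefficient $\chi(Y)$ immediately; in the paper that identification is deferred to the subsequent corollary. The cost is only the mild technicality of the subdivision step, which you flag correctly. Your suggested alternative for (2) via descent and Hirzebruch--Riemann--Roch is also sound, though it requires the (not entirely trivial) fact that flat bundles have trivial rational Chern classes.
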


\begin{proof}
 By the Lefschetz fixed point theorem \cite[p 179]{hatcher} $\chi_G(X)(g)=0$ when $g\not=1$. 
This implies that $\chi_G(X)$ 
  is an integer multiple of the regular representation by \cite[ ex 2.7]{serre}.
  The proof of the second statement is identical except that we use
  the holomorphic Lefschetz fixed point theorem 
  \cite[thm 4.12]{ab}.

\end{proof}

\begin{cor}
In case (1) (resp. 2), if $\chi(X)$ (resp. $\chi(V)$) denotes the topological (resp. holomorphic) Euler characteristic of $X$ (resp. $V$), then
 $$\chi_G(X)=\frac{\chi(X)}{|G|}\chi_{reg}$$
 $$\chi_G(V)=\frac{\chi(V)}{|G|}\chi_{reg}$$
 The coefficients on the right hand side of the two equations are integers.
\end{cor}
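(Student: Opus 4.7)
The plan is to bootstrap on the proposition that directly precedes the corollary. That result already tells us that $\chi_G(X)$ (resp.\ $\chi_G(V)$) equals some integer multiple $c\,\chi_{reg}$ of the regular character. So the only thing left is to identify the scalar $c$, and integrality will then be free of charge from the proposition itself.

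To pin down $c$, I would evaluate both sides at the identity element $g=1$. On one hand, $\chi_{reg}(1)=|G|$ by the formula recalled in Section~1. On the other hand, for any $G$-module $M$, the character value at the identity is simply $\dim M$; therefore $\chi_G(X)(1)=\sum_i(-1)^i\dim H^i(X,\C)=\chi(X)$ in case (1), and $\chi_G(V)(1)=\sum_i(-1)^i\dim H^i(X,V)=\chi(V)$ in case (2). Equating $c|G|$ with $\chi(X)$ (resp.\ $\chi(V)$) yields $c=\chi(X)/|G|$ (resp.\ $c=\chi(V)/|G|$), which is exactly the claimed formula.

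For the final assertion, that the coefficients $\chi(X)/|G|$ and $\chi(V)/|G|$ are integers, I simply note that $c$ was already asserted to be an integer by the preceding proposition, so no separate argument is needed. (One could alternatively observe in case (1) that $G$ acts freely, so $X\to X/G$ is a covering map and $\chi(X)=|G|\chi(X/G)$; and in case (2) that $V$ descends to an honest vector bundle $\overline V$ on $X/G$ with $\chi(V)=|G|\chi(\overline V)$, giving a more concrete reason for integrality.)

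There is essentially no obstacle here; the content is entirely in the preceding proposition, and the corollary is a one-line extraction of the leading coefficient by specialization at $g=1$.
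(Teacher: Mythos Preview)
Your proposal is correct and follows essentially the same approach as the paper: invoke the preceding proposition to write $\chi_G(X)=c\,\chi_{reg}$ (resp.\ $\chi_G(V)=c\,\chi_{reg}$) with $c\in\Z$, then evaluate at $g=1$ to identify $c$. Your parenthetical alternative argument for integrality via the covering map is a nice addition, but the paper's own proof is just the one-line evaluation at the identity.
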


\begin{proof}
 We know that $\chi_G(X) = c \chi_{reg}$ (resp. $\chi_G(V)=c
 \chi_{reg}$) for some $c\in \Z$. Evaluation at $g=1$ gives  the value
 of $c$.
\end{proof}

We will deduce the holomorphic
Lefschetz formula for finite order automorphisms of a Riemann surface
from the Chevalley-Weil theorem by a fairly elementary reduction.

\begin{thm}\label{thm:CWcyclic}
  Let $g$ be a nontrivial  finite order automorphism of a compact
  Riemann surface $X$. 
    Let $L$ be a line bundle, which is equivariant for the group generated by $g$, and let $X^g=\{p\mid
  g(p)=p\}$,
  and let $\tau_p,\nu_p$ be as in section 2. Then
  $$\sum_{i=0}^1 (-1)^i\tr \left[g:H^i(X,\cL) \to H^i(X,\cL)\right] = \sum_{p\in X^g}\frac{\nu_p(g)}{1-\tau_p(g)}$$
  \end{thm}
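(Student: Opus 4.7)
The plan is to deduce the formula directly from Theorem \ref{thm:cw} applied to the cyclic group $G = \langle g\rangle$ of order $N$. By definition the left hand side equals $\chi_G(L)(g)$. Evaluating the Chevalley-Weil formula at $g \neq 1$ kills the regular representation term since $\chi_{reg}(g) = 0$, so I am reduced to showing
$$-\sum_{\xi \in \G} m_\xi(L)\, \xi(g) \;=\; \sum_{p \in X^g} \frac{\nu_p(g)}{1 - \tau_p(g)}.$$
Since $m_\xi(L) = \sum_p m_{\xi,p}(L)$, I will establish this at the level of each point: set $S_p := \sum_{\xi \in \G} m_{\xi,p}(L)\, \xi(g)$, and prove that $S_p = -\nu_p(g)/(1-\tau_p(g))$ when $g$ fixes $p$ and $S_p = 0$ otherwise.

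To unwind $S_p$ I apply Proposition \ref{prop:cyclic}. Fix a generator $\xi$ of $\G$ with $\xi(g) = \zeta := e^{2\pi i/N}$ and write $\nu_p = \xi^{m_p}|_{G_p}$. Since $G_p$ is the unique subgroup of order $N_p$ inside $\langle g\rangle$, namely $G_p = \langle g^{N/N_p}\rangle$, one sees that $p \in X^g$ is equivalent to $N_p = N$. The proposition gives
$$S_p = \frac{1}{N}\sum_{k=0}^{N-1} \overline{k - m_p}\, \zeta^k,$$
where the overline denotes residue modulo $N_p$. Splitting $k = aN_p + j$ with $0 \le j < N_p$ and $0 \le a < N/N_p$, the residue depends only on $j$, and $S_p$ factors as
$$S_p = \frac{1}{N}\Bigl(\sum_{j=0}^{N_p-1} \overline{j - m_p}\, \zeta^j\Bigr)\Bigl(\sum_{a=0}^{N/N_p-1} (\zeta^{N_p})^a\Bigr).$$
The geometric sum over $a$ vanishes unless $\zeta^{N_p} = 1$, and since $\zeta$ is a primitive $N$-th root of unity this forces $N_p = N$. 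So non-$g$-fixed ramification points contribute zero, exactly as required by the Atiyah--Bott formula.

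When $p \in X^g$ the outer factor equals $1$, and shifting the index by $\ell := \overline{j - m_p}$ modulo $N$ yields $S_p = (\zeta^{m_p}/N)\sum_{\ell=0}^{N-1} \ell\, \zeta^\ell$. The identity $\sum_{\ell=0}^{N-1} \ell\, \zeta^\ell = -N/(1-\zeta)$, obtained by differentiating $\sum z^\ell = (1-z^N)/(1-z)$ and evaluating at $\zeta$, then delivers $S_p = -\zeta^{m_p}/(1-\zeta)$. With the normalization from Section 2, in which $g \in G_p$ is identified with the generator $1 \in \Z/N$, we have $\tau_p(g) = \zeta$ and $\nu_p(g) = \zeta^{m_p}$, and $S_p = -\nu_p(g)/(1 - \tau_p(g))$ as required. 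Summing over $X^g$ finishes the proof. The main obstacle is really just the bookkeeping with roots of unity: ensuring the inner geometric sum precisely detects the $g$-fixed locus, and threading the normalization of $\tau_p$ through so that the generator of $G_p$ used in Proposition \ref{prop:cyclic} is $g$ itself.
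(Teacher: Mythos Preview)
Your proof is correct and follows essentially the same route as the paper's: both reduce to Theorem~\ref{thm:cw}, kill the regular part via $\chi_{reg}(g)=0$, invoke Proposition~\ref{prop:cyclic} pointwise, factor out a geometric series in $\zeta^{N_p}$ to isolate the fixed locus, and then use the identity $\sum_{\ell=0}^{N-1}\ell\zeta^\ell=-N/(1-\zeta)$ to finish. The only cosmetic differences are your splitting $k=aN_p+j$ versus the paper's change of variables $i=\overline{k-m}$, $\ell=(k-m-i)/N_p$, and your differentiation argument for the root-of-unity identity in place of the paper's cross-multiplication.
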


\begin{proof}
Let $G\subseteq Aut(X)$ be the group generated by $g$, and let $N$
denote its order.
 Since $g\not=1$,
 we have  $\chi_{reg}(g)=0$. 
 Therefore  theorem \ref{thm:cw}  reduces to
 $$\chi_G(L)(g) = \sum_{p\in X} M_p$$
 where
 \begin{equation*}
   M_p = -\sum_{\xi\in \G} m_{\xi,p}(L)\xi(g)
  \end{equation*}

Of course $M_p=0$ unless $N_p\not=1$.  Let us assume this.
 Let $n= N/N_p$ denote the
 index of $G_p$ in $G$. Employing the statement and notation of
 proposition~\ref{prop:cyclic}, we find
 \begin{equation*}
  \begin{split}
    M_p &=
    -\frac{1}{N}\sum_{k=m}^{N-1+m}\, \overline{k-m}\,    \zeta^k\\
&=-\frac{1}{N} \sum_{i=0}^{N_p-1} \sum_{\ell=0}^{n-1}i\zeta^{i+m+\ell N_p}\\
  \end{split}
\end{equation*}
where the last expression results from the substitutions
$i=\overline{k-m}$ and $\ell =(k-m-i)/N_p$.
Therefore
\begin{equation}\label{eq:Mp2}
M_p=-\frac{\zeta^m}{N} \left(\sum_{i=0}^{N_p-1} i\zeta^{i}\right)\left(\sum_{\ell=0}^{n-1}(\zeta^{N_p})^\ell\right)
\end{equation}

 We now separate the argument into two cases depending on $n$.
First suppose that $n\not=1$. Then we claim that
$$M_p=0$$
which would be predicted by the theorem because
$X^g=\{p\mid g\in G_p\}=\{p\mid N=N_p\}$.
To see this, observe that $\zeta^{N_p}$ is a primitive $n$th
root of unity, therefore
$$\sum_{\ell=0}^{n-1}(\zeta^{N_p})^\ell=0$$
Substituting this into \eqref{eq:Mp2} shows that $M_p=0$ as claimed.

Let us now assume that $n=1$, which means that  $N_p=N\not=1$ so that $p\in X^g$.
One checks the identity
$$\sum_{i=0}^{N-1} i\zeta^i =\frac{N}{\zeta-1}$$
by cross multiplying and observing
$$\sum_{i=0}^{N-1} i\zeta^i (\zeta-1)=N\zeta^N-\sum_{i=1}^{N}\zeta^i=N$$
Therefore by \eqref{eq:Mp2}
\begin{equation*}
  \begin{split}
    M_p  
    &=-\frac{\zeta^m}{N} \left(\sum_{i=0}^{N-1} i\zeta^{i}\right)\\
    &= \frac{\zeta^m}{1-\zeta}\\
    &= \frac{\nu_p(g)}{1-\tau_p(g)}
  \end{split}
\end{equation*}
   
\end{proof}

\end{document}